\theoremstyle{plain}
\newtheorem{theorem}{Theorem}[section]
\newtheorem{prop}[theorem]{Proposition}
\newtheorem{lemma}{Lemma}[section]
\newtheorem{corol}{Corollary}[theorem]
\theoremstyle{definition}
\theoremstyle{remark}
\numberwithin{equation}{section}
\begin{document}
	\title[On the Diophantine equation $%
(5pn^{2}-1)^{x}+(p(p-5)n^{2}+1)^{y}=(pn)^{z}$]{On the Diophantine equation $%
	(5pn^{2}-1)^{x}+(p(p-5)n^{2}+1)^{y}=(pn)^{z}$ }
\author{El\.{I}f K{\i}z{\i}ldere and G\"{o}khan Soydan}
\address{\textbf{Elif K{\i}z{\i}ldere}\\
	Department of Mathematics \\
	Bursa Uluda\u{g} University\\
	16059 Bursa, Turkey}
\email{elfkzldre@gmail.com}
\urladdr{https://orcid.org/0000-0002-7651-7001}
\address{\textbf{G\"{o}khan Soydan} \\
	Department of Mathematics \\
	Bursa Uluda\u{g} University\\
	16059 Bursa, Turkey}
\email{gsoydan@uludag.edu.tr }
\urladdr{https://orcid.org/0000-0002-6321-4132}
\thanks{}
\subjclass[2010]{11D61}
\keywords{Exponential Diophantine equation, Jacobi symbol, Baker's method.}
\begin{abstract}
	Let $p$ be a prime number with $p>3$, $p\equiv 3\pmod{4}$ and let $n$ be a
	positive integer. In this paper, we prove that the Diophantine equation $%
	(5pn^{2}-1)^{x}+(p(p-5)n^{2}+1)^{y}=(pn)^{z}$ has only the positive integer
	solution $(x,y,z)=(1,1,2)$ where $pn \equiv \pm1 \pmod 5$. As an another result, we
	show that the Diophantine equation $(35n^{2}-1)^{x}+(14n^{2}+1)^{y}=(7n)^{z}$
	has only the positive integer solution $(x,y,z)=(1,1,2)$ where $n\equiv \pm 3%
	\pmod{5}$ or $5\mid n$. On the proofs, we use the properties of Jacobi
	symbol and Baker's method.
\end{abstract}

\maketitle

\section{Introduction}

\label{sec:1} 

Let $\mathbb{N}$ be the set of positive integers, $\mathbb{Z}$ the set of
integers and $\mathbb{Q}$ the field of rational numbers. Suppose that $a,b,c$
are relatively prime positive integers with $\min \{a,b,c\}>1$. The equation 
\begin{equation}
	a^{x}+b^{y}=c^{z},\ \ x,y,z\ \in \ \mathbb{N},  \label{1.1}
\end{equation}%
is an important and general exponential Diophantine equation which attracts
the attention of many authors.

In 1933, the first work was recorded by Mahler \cite{Ma}. He proved
finiteness of solutions of equation \eqref{1.1}. His method is a $p$-adic
analogue of that of Thue-Siegel, so it is ineffective in the sense that it
gives no indication on the number of possible solutions. In 1940, an
effective result for solutions of \eqref{1.1} was given by Gel'fond \cite{Ge}%
, who used Baker's method, which is on lower bounds for linear form in the
logarithms of algebraic numbers. If $x=y=p$, then the equation \eqref{1.1}
becomes the Catalan-Fermat equation $x^p+y^p=z^q$ (see \cite{DM}, \cite{Ell}%
). The case $p=2$ is an important ingredient to the topic. Essentially this
case has been solved in a quite general form by Ellenberg \cite{Ell}.

Some authors determined complete solutions of equation \eqref{1.1} for small
values of $a,b,c$ by using elemantary congruences, the quadratic reciprocity
law and the arithmetic of quadratic (or cubic) fields (see \cite{Ha}, \cite%
{Na} and \cite{U}).

Equation \eqref{1.1} has been considered in detail for Pythagorean numbers $%
a,b,c$ as well. In 1956, Je\'{s}manowicz \cite{J} set a conjecture that if $%
a,b,c$ are Pythagorean triples i.e. positive integers satisfying $%
a^{2}+b^{2}=c^{2}$, then \eqref{1.1} has only the positive integer solution $%
(x,y,z)=(2,2,2)$. Other conjectures related to equation \eqref{1.1} were set
and discussed. One is the extension of Je\'{s}manowicz' conjecture due to
Terai. In 1994, Terai \cite{T1}, conjectured that if equation \eqref{1.1}
has a solution $(x,y,z)=(p,q,r)$ with $\min (p,q,r)>1$, then the equation %
\eqref{1.1} has only the solution $(x,y,z)=(p,q,r)$. But, the years between
1999 and 2011, some authors determined the exceptional cases of Terai's
conjecture on the equation \eqref{1.1} (see \cite{Cao}, \cite{Cao2}, \cite%
{Le2}, \cite{Mi1}, \cite{Mi2}, \cite{T2}). On the other hand, heuristics
show that the equation \eqref{1.1} has at most one solution $(x,y,z)$ with $%
\min (x,y,z)>1$ (see \cite{CM}, \cite{Le3}). The conjecture was proved for
some special cases. But, in general, the problem is still unsolved. For the
details, see the survey paper on Je\'{s}manowicz and Terai conjectures,
which is written by Soydan et al \cite{SCDT}.

Now we consider the Diophantine equation 
\begin{equation}
	(am^{2}+1)^{x}+(bm^{2}-1)^{y}=(cm)^{z}\text{,}  \label{1.2}
\end{equation}%
where $a,b,c,m$ are positive integers such that $a+b=c^{2}$. Recently, there
are many papers discussing the solutions $(x,y,z)$ of the equation %
\eqref{1.2} (see \cite{Ber}, \cite{FY}, \cite{KMS}, \cite{MT1}, \cite{P}, 
\cite{SL}, \cite{T3,TH1,TH2,WWZ}).

In this paper, we are interested in the positive integer solutions of the
Diophantine equation 
\begin{equation}
	(5pn^{2}-1)^{x}+(p(p-5)n^{2}+1)^{y}=(pn)^{z},  \label{1.4}
\end{equation}%
where $p\equiv 3\pmod 4$ and $p>3$ is prime. Our main theorem is the
following.

\begin{theorem}[Main theorem]
	\label{theo:1.1} Suppose that $n$ is a positive integer, $p>3$ is a prime
	number with $p \equiv 3 \pmod 4$ and $pn \equiv \pm1 \pmod 5$. Then the equation \eqref{1.4}
	has only the positive integer solution $(x,y,z)=(1,1,2)$.
\end{theorem}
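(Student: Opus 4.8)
The plan is to recast \eqref{1.4} as $A^{x}+B^{y}=C^{z}$ with
\[
A=5pn^{2}-1,\qquad B=p(p-5)n^{2}+1,\qquad C=pn,
\]
the companion of \eqref{1.2} for $a=5p$, $b=p(p-5)$, $c=p$, $m=n$ (indeed $a+b=p^{2}=c^{2}$, so $A+B=C^{2}$ and $(x,y,z)=(1,1,2)$ solves it). Since $A\equiv-1$ and $B\equiv1\pmod{pn}$ and $A+B=C^{2}$, the integers $A,B,C$ are pairwise coprime, and $B$ is odd for every $n$; we must show $(1,1,2)$ is the only solution. Two immediate reductions: reading \eqref{1.4} modulo $p$ gives $(-1)^{x}+1\equiv0\pmod p$, so $x$ is odd; and $A>C$ excludes $z=1$, so $z\ge2$. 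The key elementary input is reduction modulo $5$, where $pn\equiv\pm1\pmod5$ is used: then $A\equiv-1$, $B\equiv(pn)^{2}+1\equiv2$ and $C\equiv\pm1\pmod5$, so (using $x$ odd) \eqref{1.4} becomes $4+2^{y}\equiv(\pm1)^{z}\pmod5$. Running over $y\bmod4$ forces $4+2^{y}\equiv1\pmod5$, i.e.\ $y\equiv1\pmod4$ (in particular $y$ odd) and $(\pm1)^{z}\equiv1\pmod5$; the latter already gives $z$ even when $pn\equiv-1\pmod5$.

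Now assume $(x,y,z)\neq(1,1,2)$ and split on $x$ (which is odd). \emph{Case $x=1$.} Subtracting $A+B=C^{2}$ from \eqref{1.4} gives $B\,(B^{y-1}-1)=C^{2}\,(C^{z-2}-1)$, equivalently $C^{z}-B^{y}=A$. If $y=1$ this forces $z=2$, contradiction; so $y\ge5$. Then $\gcd(B,C)=1$ gives $p^{2}n^{2}=C^{2}\mid B^{y-1}-1$, and since $B\equiv1-5pn^{2}\pmod{p^{2}n^{2}}$ the binomial theorem yields $B^{y-1}\equiv1-5(y-1)pn^{2}\pmod{p^{2}n^{2}}$, whence $p\mid y-1$ (as $p>5$); reduction modulo higher powers of $p$ refines the residue of $y$ further. \emph{Case $x\ge3$.} Here I would exploit the Jacobi symbols attached to \eqref{1.4}: $A^{x}\equiv C^{z}\pmod B$ with $A\equiv C^{2}\pmod B$ gives $\bigl(\tfrac{C}{B}\bigr)^{z}=\bigl(\tfrac{A}{B}\bigr)^{x}=1$; similarly $\bigl(\tfrac{C}{A}\bigr)^{z}=1$ when $n$ is even (so $A$ odd), and $\bigl(\tfrac{C}{A'}\bigr)^{z}=1$ when $n$ is odd, $A'$ being the odd part of $A$. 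Evaluating these symbols by quadratic reciprocity — using $p\equiv3\pmod4$, the congruences $A\equiv-1$, $B\equiv1\pmod{pn}$, and the residues of $A,B$ modulo $8$ — forces $z$ even in many classes of $(p,n)$; meanwhile reducing \eqref{1.4} modulo $8$ (using how highly $2$ divides $C$) forces, when $x\ge3$, that $z$ is odd if $n$ is odd and that $z=2$ if $n\equiv2\pmod4$. Combined with the mod-$5$ conclusion these yield contradictions in all but a bounded, explicitly listed set of classes modulo $8$ and $5$ (the stubborn ones being $n\equiv0\pmod4$ with $pn\equiv1\pmod5$, and two sub-classes with $n\equiv3\pmod4$).

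Whatever survives — together with the branch $x=1$, $y\ge5$ — carries a nonzero linear form in two logarithms. In the first branch it is $\Lambda'=y\log B-z\log C$, and $C^{z}-B^{y}=A<C^{2}$ makes $|\Lambda'|\ll C^{2-z}$; in the second it is $\Lambda=x\log A-z\log C$ or $y\log B-z\log C$, with $|\Lambda|<\log2$ because one of $A^{x},B^{y}$ is at least $\tfrac12C^{z}$. Baker's method now applies: a sharp lower bound for linear forms in two logarithms (of Laurent, or Matveev's general estimate), of the shape $\log|\Lambda|\gg-\log A\,\log C\,(\log\max(x,y,z))^{2}$, gives on comparison an effective bound for $\max(x,y,z)$. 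The finitely many remaining triples are then killed off by the congruence data already in hand — $x$ odd, $y\equiv1\pmod4$, $p\mid y-1$ and its $p$-adic refinements in the first branch, the mod-$8$ and Jacobi constraints in the second — together with the identity $A+B=C^{2}$ and a direct check.

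The main obstacle is the case $x\ge3$: proving $z=2$ there forces the Jacobi-symbol analysis to be made genuinely exhaustive over the residues of $p$ and $n$ modulo $8$ and their interaction with $pn\equiv\pm1\pmod5$, each class demanding the right choice of modulus among $B$, $A$ and $A'$, and one must check that the leftover classes are few and small enough that the Baker estimate disposes of them. The branch $x=1$ is lighter but still hinges on combining ``$p\mid y-1$ (plus $p$-adic refinement)'' with one application of Baker's theorem, and on that bound beating the lower bound $y\ge p+1$ — i.e.\ on the argument closing uniformly in $n$.
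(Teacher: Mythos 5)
Your overall architecture (elementary congruences and Jacobi symbols to force $x=1$, then a linear form in two logarithms, then a finite check) does match the paper's, and your mod $5$ observation is correct and even slightly sharper than what the paper uses: $4+2^{y}\equiv\pm1\pmod 5$ does force $y\equiv1\pmod 4$, hence $y$ odd. But there are two genuine gaps that prevent the argument from closing.

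First, the case $x\ge3$ is not actually disposed of. You concede that the Jacobi/mod-$8$ analysis leaves ``stubborn'' residue classes (e.g.\ $n\equiv0\pmod4$), and you propose to kill the survivors with Baker's method plus a ``direct check''; but those survivors range over infinitely many pairs $(p,n)$, so no finite check is available and Baker alone bounds only the exponents, not $p$ or $n$. The paper avoids this entirely: for $n$ even it reduces \eqref{1.4} modulo $n^{3}$ (valid for any odd $x$ once $z\ge3$) to get $5px+p(p-5)y\equiv0\pmod n$, which is impossible by parity since $5px$ is odd and $p(p-5)y$ is even — a two-line argument you are missing; for $n$ odd it combines your $y$-odd conclusion mod $5$ with the mod $4$ reduction (which for $x\ge3$ gives $y\equiv z\pmod 2$ and, via one Jacobi-symbol computation when $n\equiv1\pmod4$, that $z$ is even), yielding a clean contradiction with no leftover classes.

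Second, and more seriously, your quantitative endgame does not close uniformly in $n$ — exactly the worry you flag in your last sentence. In the branch $x=1$, $y\ge3$, your lower bound is $p\mid y-1$, i.e.\ $y\ge p+1$, which does not grow with $n$; comparing with Laurent's upper bound $y\ll\log(pn)$ bounds $p$ only in terms of $\log n$ and leaves $n$ unbounded. The paper's Lemma \ref{lemma:3.4} is the missing ingredient: since $y\ge3$ forces $z\ge4$, reducing $w^{z}-v^{y}=u$ modulo $p^{2}n^{4}$ gives $5+(p-5)y\equiv0\pmod{pn^{2}}$, hence $y>n^{2}-2$, and this, against $y<2521\log(pn)$, bounds $n$. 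Even then one still needs a separate gap-principle argument (comparing $\Omega_{0}=2\log w-\log v$ with $\Omega=z\log w-y\log v$, using $z$ even or $2y>z$) to get an absolute bound $p<12610$ before the continued-fraction computation can run. Without a lower bound for $y$ that grows with $n$ and without an independent bound on $p$, your ``finitely many remaining triples'' are in fact infinitely many.
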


\begin{corol}
	\label{corol:1.2} Suppose that $n\equiv\pm 3 \pmod{5}$ or $5 \mid n$. Then
	the exponential Diophantine equation 
	\begin{equation}  \label{1.5}
		(35n^2-1)^x+(14n^2+1)^y=(7n)^z
	\end{equation}
	has only the positive integer solution $(x,y,z)=(1,1,2)$.
\end{corol}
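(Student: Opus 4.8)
The plan is to obtain Corollary~\ref{corol:1.2} as the specialization $p=7$ of Theorem~\ref{theo:1.1}, after separately disposing of the case $5\mid n$, which lies outside the theorem's hypotheses. Setting $p=7$ in~\eqref{1.4} gives $5pn^{2}-1=35n^{2}-1$, $p(p-5)n^{2}+1=14n^{2}+1$ and $(pn)^{z}=(7n)^{z}$, so~\eqref{1.4} becomes~\eqref{1.5}; moreover $7$ is a prime with $7>3$ and $7\equiv 3\pmod 4$, so the only hypothesis of Theorem~\ref{theo:1.1} still to be checked is the congruence on $pn$ modulo $5$. Since $7\equiv 2\pmod 5$ we have $pn\equiv 2n\pmod 5$, and a one-line computation shows that $pn\equiv\pm1\pmod 5$ is equivalent to $n\equiv\pm3\pmod 5$. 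Hence for $n\equiv\pm3\pmod 5$ Theorem~\ref{theo:1.1} applies directly and yields that $(x,y,z)=(1,1,2)$ is the only positive integer solution of~\eqref{1.5}.

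It remains to treat $5\mid n$. Here I would reprove Theorem~\ref{theo:1.1} with $p=7$ fixed, following the argument line by line; every step is insensitive to the residue of $n$ modulo $5$ except the place where the hypothesis $pn\equiv\pm1\pmod 5$ is invoked, namely the Jacobi-symbol computation that rules out $z$ odd (equivalently, forces $z$ even). When $5\mid n$ this conclusion is instead obtained directly: reducing~\eqref{1.5} modulo $5$ gives $(-1)^{x}+1\equiv(7n)^{z}\equiv 0\pmod 5$, so $x$ is odd, and sharpening this modulo $25$ together with the parity data modulo $4$ coming from $7\equiv 3\pmod 4$ supplies the same restrictions on $x,y,z$ that the mod-$5$ hypothesis provides in the main proof. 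Once these restrictions are in place, the remainder of the proof of Theorem~\ref{theo:1.1} — the elementary gap and Jacobi-symbol estimates, followed if necessary by the bound on $z$ from Baker's method on linear forms in logarithms and a finite check — carries over verbatim and forces $(x,y,z)=(1,1,2)$.

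The only genuinely new work is therefore the case $5\mid n$, and the main obstacle is to locate precisely where the proof of Theorem~\ref{theo:1.1} uses $pn\equiv\pm1\pmod 5$ and to verify that the substitute congruences modulo $5$, $25$ and $4$ deliver exactly the same conclusions; everything downstream of that point, including the application of Baker's method, is a routine specialization requiring no new idea.
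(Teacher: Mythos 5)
The reduction of the case $n\equiv\pm3\pmod 5$ to Theorem~\ref{theo:1.1} with $p=7$ is correct and is exactly what the paper does. The gap is in the case $5\mid n$, where the substitute you propose for the hypothesis $pn\equiv\pm1\pmod 5$ does not exist. First, you have mislocated where that hypothesis enters the proof of Theorem~\ref{theo:1.1}: it is not used in the Jacobi-symbol computation (which needs only $p\equiv 3\pmod 4$ and $n\equiv 1\pmod 4$), but in the final congruence modulo $5$ in Lemma~\ref{lemma:3.2}, which is precisely the step that forces $x=1$. There one needs $(pn)^2\equiv 1\pmod 5$, so that $p(p-5)n^2+1\equiv 2\pmod 5$ and $(pn)^z\equiv\pm1\pmod 5$, and the resulting relation $-1+(\pm1)\equiv\pm1\pmod 5$ is impossible. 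When $5\mid n$ this collapses entirely: modulo $5$, and equally modulo $25$ (since $25\mid n^2$), equation \eqref{1.5} reads $-1+1\equiv 0$, i.e.\ $0\equiv 0$, and no power of $5$ yields any obstruction. So your proposed ``sharpening modulo $25$ together with the parity data modulo $4$'' produces nothing; in particular you cannot prove $x=1$, the reduction to Pillai's equation never takes place, and nothing downstream carries over.

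The paper handles $5\mid n$ by a genuinely different and non-elementary route: it applies Bugeaud's bound for linear forms in $p$-adic logarithms (Proposition~\ref{prop:2.2}) with $p=5$ to $\Omega=(14n^2+1)^y-(1-35n^2)^x$, whose $5$-adic valuation equals $\upsilon_5\bigl((7n)^z\bigr)\geq z$, obtaining an upper bound for $z$ in terms of $\log(14n^2+1)$ and $\log(35n^2-1)$. This is played against the lower bound $\max\{x,y\}\geq n^2/49$ obtained from \eqref{1.5} modulo $n^4$ (valid once $z\geq 4$), which rules out all $n\geq 173356$; the remaining range $n\leq 2031$ with $5\mid n$ is eliminated by a MAGMA search, and $z\leq 3$ is treated directly. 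To repair your argument you would need either to reproduce this $5$-adic Baker-type estimate together with the finite computation, or to find a genuinely new elementary obstruction; congruences modulo powers of $5$ cannot supply one.
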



\section{Auxiliary results}


In this section, we recall some results that will be important for the
proofs of the main theorem and the corollary. We first need a result on
lower bounds for linear forms in the logarithms of two algebraic numbers to
obtain an upper bound for the solution $y$ of Pillai's equation $w^z-v^y=u$
under some conditions. (We will give the details about Pillai's equation in
Section \ref{S.3.3}). Here, we first present some notations. Suppose that $\beta_{1}$
and $\beta_{2}$ are real algebraic numbers with $|\beta_{1}|\geq 1$ and $%
|\beta_{2}|\geq 1$. Consider the linear form 
\begin{equation*}
	\Omega= b_{2}\log\beta_{2}-b_{1}\log\beta_{1},
\end{equation*}
where $b_{1}$ and $b_{2}$ are positive integers. For any non-zero algebraic
number $\gamma$ of the degree $d$ over $\mathbb{Q}$, whose minimal
polynomial over $\mathbb{Z}$ is $a_{0} \prod_{j=1}^{d} (X-\gamma^{(j)})$, we
denote by 
\begin{equation*}
	h(\gamma)=\frac{1}{d} (\log|a_{0}|+ \sum_{j=1}^{d}
	\log\max(1,|\gamma^{(j)}|))
\end{equation*}
its absolute logarithmic height, where $(\gamma^{(j)})_{1\leq j \leq d}$ are
conjugates of $\gamma$. Suppose that $B_{1}$ and $B_{2}$ are real numbers
greater than 1 with 
\begin{equation*}
	\log B_{i} \geq \max\{h(\gamma_{i}), \frac{|\log \gamma_{i}|}{D}, \frac{1}{%
		D}\},
\end{equation*}
for $i \in \{1, 2\}$, where $D$ is the degree of the number field $%
\mathbb{Q}(\gamma_{1}, \gamma_{2})$ over $\mathbb{Q}$. Define 
\begin{equation*}
	b^{\prime }= \dfrac{b_{1}}{D\log B_{2}}+\dfrac{b_{2}}{D\log B_{1}}.
\end{equation*}
To use Laurent's result in \cite[Corollary 2]{La}, we take $m=10$ and $%
C_{2}=25.2$.

\begin{prop}
	\label{prop:2.1} \textnormal{(Laurent \cite{La})} Suppose that $\Omega$ is
	given as above, with $\gamma_{1}>1$ and $\gamma_{2}>1$. Let $\beta_{1}$ and $%
	\beta_{2}$ be multiplicatively independent. Then 
	\begin{equation*}
		\log |\Omega| \geq -25.2 D^4 (\max \{\log b^{\prime }+0.38, \frac{10}{D}%
		\})^{2}\log B_{1} \log B_{2}.
	\end{equation*}
\end{prop}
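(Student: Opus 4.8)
Proposition~\ref{prop:2.1} is quoted verbatim from Laurent \cite[Corollary 2]{La} (with the specialization $m=10$, $C_{2}=25.2$ recorded just above), so within the present paper it is invoked as a black box and needs only the citation. For completeness I describe the strategy by which a lower bound of exactly this shape is obtained, namely Laurent's \emph{interpolation determinant} method.

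The plan is to argue by contradiction: assume $|\Omega|$ is so small that the displayed inequality fails, and derive a contradiction by confronting an analytic upper bound with an arithmetic lower bound for a suitable determinant. Introduce integer parameters $K$ and $L$, to be fixed only at the end, and form a square matrix $\mathcal{M}$ of size $N=KL$ whose entries are the values, at $N$ auxiliary integer points $z_{1},\dots,z_{N}$, of the $N$ functions $\varphi_{\lambda,\mu}(z)=z^{\lambda}\,\beta_{1}^{\,\mu b_{1}z}$ with $0\le\lambda<K$, $0\le\mu<L$. Put $\Delta=\det\mathcal{M}$. The decisive analytic input is the identity $\beta_{2}^{\,b_{2}z}=\beta_{1}^{\,b_{1}z}\,\e^{z\Omega}$: when $|\Omega|$ is tiny, the two exponential families $\beta_{1}^{\,b_{1}z}$ and $\beta_{2}^{\,b_{2}z}$ nearly coincide, so that columns of $\mathcal{M}$ built from them are close to linearly dependent and $\Delta$ is forced to be small.

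The estimate is made precise in two halves. \emph{Analytic upper bound:} view $\Delta$ as the value at $z=1$ of the entire function $F(z)=\det\bigl(\varphi_{\lambda,\mu}(z\,z_{i})\bigr)$, which (by its confluent Vandermonde structure, together with the near–coincidence above) vanishes to high order $T$ at the origin; the Schwarz lemma on a disc of radius $\rho>1$ then gives $|\Delta|=|F(1)|\le\rho^{-T}\max_{|z|=\rho}|F(z)|$, and the maximum is controlled through Hadamard's inequality by the sizes of the entries, i.e.\ by $K$, $L$, the heights $h(\beta_{i})$ and $|\log\beta_{i}|$. This produces an upper bound for $\log|\Delta|$ that is negative and proportional to $KL$, with a correction measured by $\log|\Omega|$. \emph{Arithmetic lower bound:} after multiplying by an appropriate power of the leading coefficients of the minimal polynomials of $\beta_{1},\beta_{2}$, $\Delta$ becomes an algebraic integer whose degree over $\QQ$ and whose house are bounded explicitly in terms of $K,L,D,h(\beta_{1}),h(\beta_{2})$; hence, \emph{provided} $\Delta\ne0$, Liouville's inequality yields $\log|\Delta|\ge-c(K,L,D,h(\beta_{1}),h(\beta_{2}))$.

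Comparing the two bounds, for a suitable choice of $K$ and $L$ the arithmetic lower bound overtakes the analytic upper bound, which is impossible unless $\Delta=0$. Here the hypothesis that $\beta_{1}$ and $\beta_{2}$ are multiplicatively independent becomes indispensable: a \emph{zero estimate} shows that, for parameters in the admissible range, the points $z_{i}$ and the functions $\varphi_{\lambda,\mu}$ are in sufficiently general position that $\mathcal{M}$ has full rank, i.e.\ $\Delta\ne0$, contradicting the previous line. This contradiction shows $|\Omega|$ cannot be that small, which is the assertion. The step I expect to be the main obstacle is the concluding optimization: one must choose $K$, $L$, the radius $\rho$ and the vanishing order $T$ so as to balance the two estimates exactly, and then track every constant through the Schwarz lemma, Hadamard's inequality, Liouville's inequality and the zero estimate, in order to recover precisely the factor $25.2\,D^{4}\bigl(\max\{\log b'+0.38,\ 10/D\}\bigr)^{2}\log B_{1}\log B_{2}$ with the stated numerical value; the auxiliary quantities $B_{1}$, $B_{2}$ and $b'$ are exactly the bookkeeping devices that render this optimization uniform in the data.
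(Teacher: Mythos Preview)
Your proposal is correct and matches the paper's treatment: Proposition~\ref{prop:2.1} is not proved in the paper but is simply cited from Laurent \cite[Corollary~2]{La} with the specialization $m=10$, $C_{2}=25.2$, exactly as you note in your first sentence. Your additional exposition of the interpolation determinant method is accurate background but goes beyond anything the paper itself supplies.
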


Now, we need a result of Bugeaud \cite{Bu}, which is based on linear forms
in $p$-adic logarithms. Here, we take the case where $y_{1}=y_{2}=1$ in the
notation of \cite[p.375]{Bu}.

Suppose that $p$ is a prime and let $\upsilon_{p}$ denote the standard $p$%
-adic valuation normalized by $\upsilon_{p}(p)=1$. Suppose that $\delta_{1}$
and $\delta_{2}$ are non-zero integers prime to $p$ and let $h$ denote the
smallest positive integer such that 
\begin{equation*}
	\upsilon_{p}(\delta_{1}^{h}-1)>0, \ \ \upsilon_{p}(\delta_{2}^{h}-1)>0.
\end{equation*}
Suppose that there exists a real number $F$ such that $\upsilon_{p}(%
\delta_{1}^{h}-1)\geq F> \frac{1}{p-1}$. Bugeaud \cite{Bu} obtained explicit
upper bounds for the $p$-adic valuation of 
\begin{equation*}
	\Omega= \delta_{1}^{b_{1}}- \delta_{2}^{b_{2}},
\end{equation*}
where $b_{1}$ and $b_{2}$ are positive integers. We have

\begin{prop}
	\label{prop:2.2} \textnormal{(Bugeaud \cite{Bu})} Let $B_{1}>1, B_{2}>1$ be
	real numbers such that 
	\begin{equation*}
		\log B_{i} \geq \max \{\log |\delta_{i}|, F\log p\}, \ i=1, 2,
	\end{equation*}
	and we put 
	\begin{equation*}
		b^{\prime }= \dfrac{b_{1}}{\log B_{2}}+\dfrac{b_{2}}{\log B_{1}}.
	\end{equation*}
	If $\delta_{1}$ and $\delta_{2}$ are multiplicatively independent, then we
	have the upper estimates 
	\begin{equation*}
		\upsilon_{p}(\Omega) \leq \dfrac{36.1h}{F^3(\log p)^4} (\max \{\log b'+\log (F\log p)+0.4, 6F\log p, 5 \})^2 \log B_{1} \log B_{2}.
	\end{equation*}
\end{prop}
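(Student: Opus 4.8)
The quantity to be bounded is the $p$-adic valuation of $\Omega=\delta_{1}^{b_{1}}-\delta_{2}^{b_{2}}$, and the estimate is exactly a lower bound for a linear form in two $p$-adic logarithms; the plan is therefore to argue along the lines of the $p$-adic Baker method, in the interpolation-determinant formulation that parallels Laurent's archimedean Proposition~\ref{prop:2.1}. First I would reduce from $\Omega$ to the associated logarithmic form. Since $\delta_{1},\delta_{2}$ are prime to $p$ and $h$ is the least index with $\upsilon_{p}(\delta_{i}^{h}-1)>0$, the numbers $\delta_{1}^{h},\delta_{2}^{h}$ lie in $1+p\ZZ_{p}$, so that $\log_{p}\delta_{i}:=h^{-1}\log_{p}(\delta_{i}^{h})$ is well defined. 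In the interesting range $\upsilon_{p}(\Omega)>\tfrac{1}{p-1}$ one has $\Theta:=\delta_{1}^{b_{1}}\delta_{2}^{-b_{2}}\in 1+p\ZZ_{p}$ and $\upsilon_{p}(\log_{p}(1+u))=\upsilon_{p}(u)$, so writing $\Omega=\delta_{2}^{b_{2}}\bigl(\Theta-1\bigr)$ with $\upsilon_{p}(\delta_{2})=0$ gives $\upsilon_{p}(\Omega)=\upsilon_{p}(\Theta-1)=\upsilon_{p}(\log_{p}\Theta)=\upsilon_{p}(\Lambda)$, where $\Lambda=b_{1}\log_{p}\delta_{1}-b_{2}\log_{p}\delta_{2}$; outside that range the asserted bound holds trivially because its right-hand side is positive. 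The hypothesis $F>\tfrac{1}{p-1}$ is precisely what legitimises this identity, and it thus suffices to bound $\upsilon_{p}(\Lambda)$ from above.

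To bound $\upsilon_{p}(\Lambda)$ I would construct an interpolation determinant. Choosing integers $L$ and $T$ to be optimised later, one forms the square matrix whose entries are the values, at $T$ consecutive integer points, of the monomial functions $z\mapsto \delta_{1}^{\ell_{1}z}\delta_{2}^{\ell_{2}z}$ as $(\ell_{1},\ell_{2})$ ranges over a block of size $L$, and lets $\Delta$ be its determinant. Two opposing estimates drive the argument. On the arithmetic side, $\Delta$ is an algebraic number whose house and denominator are controlled by the heights $h(\delta_{i})\le \log|\delta_{i}|\le \log B_{i}$ and by $L,T$, so the product formula (a Liouville-type inequality) bounds $\upsilon_{p}(\Delta)$ from above once $\Delta\neq0$. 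On the analytic side, a $p$-adic Schwarz lemma shows that if $\upsilon_{p}(\Lambda)$ were large then, after replacing each $\delta_{i}^{z}$ by $\exp_{p}(z\log_{p}\delta_{i})$ and expanding, every entry---and hence $\Delta$---would be divisible by a correspondingly high power of $p$, forcing $\upsilon_{p}(\Delta)$ to exceed the Liouville bound. The non-vanishing $\Delta\neq0$ needed to close the loop follows from a zero estimate: a Vandermonde/Laplace computation shows $\Delta$ cannot vanish unless $\delta_{1},\delta_{2}$ are multiplicatively dependent, which is excluded by hypothesis. Comparing the two inequalities yields the upper bound for $\upsilon_{p}(\Lambda)$, hence for $\upsilon_{p}(\Omega)$.

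The principal obstacle is not the scheme above but the explicit optimisation that produces the stated constants. The exponents $F^{-3}$ and $(\log p)^{-4}$, the factor $36.1\,h$, and the precise shape of $\max\{\log b'+\log(F\log p)+0.4,\,6F\log p,\,5\}$ all emerge only after choosing $L$ and $T$ near their optimal values and tracking every constant through Siegel's lemma, the determinant size estimate, and the Schwarz lemma. The parameter $F$ enters as the rate at which the auxiliary values decay $p$-adically, so the bound degrades like $F^{-3}$; balancing this decay against the growth of the heights $h(\delta_{i})$ and of $b'$ is where the delicacy lies. This fully explicit bookkeeping, rather than any conceptual difficulty, is the technical heart of the argument, and it is carried out in detail in \cite{Bu}.
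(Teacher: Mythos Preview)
The paper does not prove Proposition~\ref{prop:2.2} at all: it is stated as an auxiliary result quoted directly from Bugeaud~\cite{Bu}, with no argument given. Your sketch of the $p$-adic interpolation-determinant method is a reasonable outline of how such bounds are obtained in the literature, but there is nothing in the paper to compare it against; the authors simply invoke the result as a black box.
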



\section{Proof of Theorem \protect\ref{theo:1.1}}


In this section, we prove Theorem \ref{theo:1.1}.

Suppose that $(x,y,z)$ is a solution of \eqref{1.4}. Considering \eqref{1.4}
modulo $p$ implies that $(-1)^x+1 \equiv 0 \pmod p$. Therefore $x$ is odd.

\subsection{The case where n is odd with $n \not\equiv 0 \pmod 5$}

\begin{lemma}
	\label{lemma:3.2} Let $n$ be odd and $pn \equiv \pm1 \pmod 5$. If the
	equation \eqref{1.4} has a positive integer solution, then $x=1$.
\end{lemma}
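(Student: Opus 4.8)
The plan is to study equation \eqref{1.4} modulo a suitably chosen modulus, using the Jacobi symbol, to force $x=1$ from the congruence conditions. Write $A=5pn^{2}-1$ and $B=p(p-5)n^{2}+1$, so that $A+B=(pn)^{2}$, and recall from the preamble that $x$ is already known to be odd. Since $n$ is odd and $p\equiv 3\pmod 4$, one checks $A=5pn^{2}-1\equiv 5\cdot 3-1\equiv 2\pmod 4$ (as $pn^{2}$ is odd), hence $A\equiv 2\pmod 8$ or $\equiv 6\pmod 8$ depending on $n,p$; more useful is that $B=(pn)^{2}-A$ is odd. Reducing \eqref{1.4} modulo $A$ gives $B^{y}\equiv (pn)^{z}\pmod A$, and since $(pn)^{2}\equiv A+B\equiv B\pmod A$ we get $B^{y}\equiv (pn)^{z}\equiv B^{z/2}$ when $z$ is even; the first step is therefore to pin down the parity of $z$ (looking at \eqref{1.4} modulo $4$, using $A\equiv 2\pmod 4$ and $B$ odd, should give $z$ even, say $z=2w$), after which $B^{y}\equiv (pn)^{2w}\equiv B^{w}\pmod A$.

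The heart of the argument is a Jacobi symbol computation. From \eqref{1.4} one has $A^{x}\equiv (pn)^{z}\pmod B$ and $B^{y}\equiv (pn)^{z}\pmod A$; combined with $A\equiv -B\pmod{(pn)^{2}}$ is too crude, so instead I would take the equation modulo a prime divisor $q$ of $B$ (or of an appropriate factor), compute the Legendre symbol $\left(\tfrac{A}{q}\right)$, and use quadratic reciprocity together with $A+B=(pn)^{2}$. The hypothesis $pn\equiv\pm 1\pmod 5$ enters precisely here: it controls $\left(\tfrac{5}{\,\cdot\,}\right)$-type symbols, since $A\equiv -1\pmod 5$ when $5\nmid n$ is not automatic — rather $A=5pn^{2}-1\equiv -1\pmod 5$ always, and $B=p(p-5)n^{2}+1\equiv p^{2}n^{2}+1\equiv (pn)^{2}+1\equiv 1+1=2\pmod 5$ when $pn\equiv\pm 1$. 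The plan is: assume for contradiction $x\geq 3$ (so $x$ odd, $x\geq 3$), reduce \eqref{1.4} modulo $A^{2}$ or modulo a well-chosen divisor, extract from $A^{x}\equiv A\cdot A^{x-1}$ and $A^{x-1}$ being a square that $B^{y}\equiv (pn)^{z}\pmod{A}$ forces a symbol identity $\left(\tfrac{B}{A}\right)=\left(\tfrac{(pn)^{z}}{A}\right)$, then evaluate both sides. Since $(pn)^{2}\equiv B\pmod A$, the right side is $\left(\tfrac{B}{A}\right)^{z/2}\cdot$(something), and comparing with the left forces a parity/sign condition that, under $pn\equiv\pm1\pmod 5$, is violated unless $x=1$.

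Concretely, I expect the clean route is: work with the Jacobi symbol $\left(\tfrac{-1}{\,\cdot\,}\right)$ and $\left(\tfrac{pn}{A}\right)$. Using $x$ odd, $A^{x}+B^{y}=(pn)^{2w}$ gives $(-1)^{?}\cdots$; evaluating $\left(\tfrac{pn}{A}\right)$ two ways — once via $A\equiv -1\pmod{pn}$-type reduction and reciprocity (here $A\equiv 2\pmod 4$ and $p\equiv 3\pmod 4$ matter for the reciprocity sign), and once via $(pn)^{2w}\equiv B^{y}\pmod{A}$ — yields $\left(\tfrac{B}{A}\right)=1$ forced against an independently computed value $\left(\tfrac{B}{A}\right)=\left(\tfrac{2}{A}\right)\left(\tfrac{-1}{A}\right)=-1$ (using $B\equiv 2\pmod 5$ is not quite it; rather $B\equiv -A^{x}\cdots$). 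The main obstacle I anticipate is choosing the right modulus and keeping track of the several quadratic-reciprocity sign factors ($\left(\tfrac{2}{A}\right)$, $\left(\tfrac{-1}{A}\right)$, the reciprocity flip between $A$ and odd primes dividing $B$ or $pn$), since $A$ is even; it may be cleaner to factor out the power of $2$ in $A$ first, or to pass to the odd part $A_{1}$ of $A$ and compute $\left(\tfrac{\cdot}{A_{1}}\right)$. Once the sign bookkeeping is organized, the contradiction with $x\geq 3$ is immediate, giving $x=1$.
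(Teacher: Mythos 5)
Your proposal is a plan rather than a proof: the two computations it leans on are respectively wrong and never carried out, and the step that actually produces the contradiction in the paper is missing. First, reducing \eqref{1.4} modulo $4$ does \emph{not} give that $z$ is even. With $x\geq 2$ one has $A^{x}\equiv 0\pmod 4$ (since $A\equiv 2\pmod 4$) and $B\equiv 3\pmod 4$, so the congruence reads $3^{y}\equiv (pn)^{z}\equiv (3n)^{z}\pmod 4$: when $n\equiv 3\pmod 4$ this gives $y$ even and says nothing about $z$, and when $n\equiv 1\pmod 4$ it only gives $y\equiv z\pmod 2$. The paper obtains the parity of $z$ (only needed in the case $n\equiv 1\pmod 4$) from a Jacobi-symbol computation modulo the \emph{odd} number $B=p(p-5)n^{2}+1$: since $A\equiv A+B=(pn)^{2}\pmod B$ one gets $\bigl(\tfrac{A}{B}\bigr)=1$, while reciprocity with $p\equiv 3\pmod 4$, $n\equiv 1\pmod 4$ and $B\equiv 1\pmod{pn}$ gives $\bigl(\tfrac{pn}{B}\bigr)=-1$; then $A^{x}\equiv (pn)^{z}\pmod B$ forces $(-1)^{z}=1$. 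Your symbols $\bigl(\tfrac{B}{A}\bigr)$, $\bigl(\tfrac{2}{A}\bigr)$, $\bigl(\tfrac{-1}{A}\bigr)$ have the even number $A$ in the denominator and are not defined; you acknowledge this but never resolve it, and the advertised clash ``$\bigl(\tfrac{B}{A}\bigr)=1$ versus $-1$'' is never derived.

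Second, the contradiction does not come from a Jacobi-symbol sign clash at all; it comes from reduction modulo $5$, and this is where the hypothesis $pn\equiv\pm1\pmod 5$ and the parity of $y$ are used. Once $y$ is even (from the mod $4$ step above, which is exactly where the assumption $x\geq 2$ enters), one has modulo $5$: $A\equiv -1$, $B\equiv (pn)^{2}+1\equiv 2$, hence $B^{y}=4^{y/2}\equiv\pm1$, and $(pn)^{z}\equiv\pm1$; since $x$ is odd, \eqref{1.4} reads $-1+(\pm1)\equiv\pm1\pmod 5$, i.e.\ $0$ or $3$ equals $1$ or $4$, which is impossible. You record the ingredients $A\equiv-1$ and $B\equiv 2\pmod 5$ but never assemble them, and without the parity of $y$ the residue $B^{y}\bmod 5$ is not controlled (it could be $2$ or $3$, and the equation can then be satisfied). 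As written, the proposal cannot be completed along the lines you describe without importing precisely these missing steps.
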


\begin{proof}
	Assume that $pn \equiv \pm 1 \pmod 5$. If $x>2$, then we desire to get a
	contradiction. We consider the proof in two cases: Case 1: $n \equiv 3 
	\pmod
	4$, \ Case 2: $n \equiv 1 \pmod 4$.
	
	Case 1: $n \equiv 3 \pmod 4$. Then considering \eqref{1.4} modulo $4$
	implies that $3^{y} \equiv 1 \pmod 4$, then $y$ is even.
	
	Considering \eqref{1.4} modulo $5$, one gets 
	\begin{equation}  \label{3.1}
		-1+(\pm 1) \equiv (\pm 1) \pmod 5
	\end{equation}
	which is a contradiction. Therefore we get $x=1$.
	
	Case 2: $n \equiv 1 \pmod 4$. Then $\Big(\dfrac{5pn^2-1}{p(p-5)n^2+1}\Big)=1$
	and
	
	$\Big(\dfrac{pn}{p(p-5)n^2+1}\Big)=-1$, where $(\frac{*}{*})$ denotes the
	Jacobi symbol. Indeed, 
	\begin{equation*}
		\Big(\dfrac{5pn^2-1}{p(p-5)n^2+1}\Big)=\Big(\dfrac{5pn^2+p(p-5)n^2}{%
			p(p-5)n^2+1}\Big)=\Big(\dfrac{p^{2}n^{2}}{p(p-5)n^2+1}\Big)=1
	\end{equation*}
	and 
	\begin{equation*}
		\Big(\dfrac{pn}{p(p-5)n^2+1}\Big)=\Big(\dfrac{p}{p(p-5)n^2+1}\Big)\Big(%
		\dfrac{n}{p(p-5)n^2+1}\Big)
	\end{equation*}
	\begin{equation*}
		=-\Big(\dfrac{p(p-5)n^2+1}{p}\Big)\Big(\dfrac{p(p-5)n^2+1}{n}\Big)=-1,
	\end{equation*}
	when $n \equiv 1 \pmod 4$ and $p \equiv 3 \pmod 4$. So we get that $z$ is
	even from \eqref{1.4}.
	
	Considering \eqref{1.4} modulo $4$ implies that $3^{y} \equiv (pn)^z \equiv
	3^{z} \equiv 1 \pmod 4$ since $z$ is even. So $y$ is even. Again considering %
	\eqref{1.4} modulo $5$, one obtains 
	\begin{equation*}
		(-1)+ (\pm 1) \equiv 1 \pmod 5
	\end{equation*}
	which is a contradiction. Therefore we get $x=1$.
\end{proof}

\subsection{The case n is even}

\begin{lemma}
	\label{lemma:3.1} Let $n$ be even. Then the equation \eqref{1.4} has only
	the positive integer solution $(x,y,z)=(1,1,2)$.
\end{lemma}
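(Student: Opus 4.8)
The plan is to handle the case $n$ even by first reducing $x$ to $1$ and then pinning down $y$ and $z$. Since we already know $x$ is odd, I would begin exactly as in Lemma~\ref{lemma:3.2}: assume for contradiction that $x>2$ (hence $x\ge 3$) and look for a parity obstruction. With $n$ even, reduction of \eqref{1.4} modulo $4$ gives $(5pn^2-1)^x\equiv(-1)^x\equiv -1\pmod 4$ and $(p(p-5)n^2+1)^y\equiv 1\pmod 4$, so the left side is $\equiv 0\pmod 4$, forcing $z\ge 2$ and in particular $(pn)^z\equiv 0\pmod 4$ automatically since $n$ is even — so modulo $4$ alone is not decisive. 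Instead I would compute the relevant Jacobi symbols with respect to the modulus $p(p-5)n^2+1$, which is odd and $\equiv 1\pmod 4$: as in the proof of Lemma~\ref{lemma:3.2} one still gets $\big(\tfrac{5pn^2-1}{p(p-5)n^2+1}\big)=\big(\tfrac{p^2n^2}{p(p-5)n^2+1}\big)=1$, while $\big(\tfrac{pn}{p(p-5)n^2+1}\big)$ reduces via reciprocity (using $p\equiv 3\pmod 4$) to $-\big(\tfrac{p(p-5)n^2+1}{p}\big)\big(\tfrac{p(p-5)n^2+1}{n}\big)=-\big(\tfrac{1}{p}\big)\big(\tfrac{1}{n}\big)$. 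The delicate point is the dependence on $n\bmod 4$ and on whether $2\mid n$ affects the Jacobi symbol $\big(\tfrac{n}{\cdot}\big)$ — here one must be careful that $n$ is even, so $\big(\tfrac{pn}{p(p-5)n^2+1}\big)$ should be split as $\big(\tfrac{2}{\cdot}\big)^{v_2(n)}\big(\tfrac{p}{\cdot}\big)\big(\tfrac{n/2^{v_2(n)}}{\cdot}\big)$ and $\big(\tfrac{2}{p(p-5)n^2+1}\big)=1$ since $p(p-5)n^2+1\equiv 1\pmod 8$ when $8\mid n^2$, i.e. when $n$ is even. The upshot should be $\big(\tfrac{pn}{p(p-5)n^2+1}\big)=-1$, so reducing \eqref{1.4} modulo $p(p-5)n^2+1$ gives $(pn)^z\equiv (5pn^2-1)^x$, whence $\big(\tfrac{(pn)^z}{p(p-5)n^2+1}\big)=1$ forces $z$ even (using that $x$ is odd, so the symbol on $(pn)^x$ is $-1$, hence $z$ must be even to make $(pn)^z$ a square modulo this prime-power... more precisely $z$ even). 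Then modulo $5$: since $pn\equiv\pm1\pmod 5$ we have $(pn)^z\equiv 1\pmod 5$ as $z$ is even, and $5pn^2-1\equiv -1\pmod 5$, so $(5pn^2-1)^x\equiv -1\pmod 5$ as $x$ is odd, forcing $(p(p-5)n^2+1)^y\equiv 2\pmod 5$; but $p(p-5)n^2+1\equiv 1\pmod 5$ (since $5pn^2\equiv 0$), so the left side is $\equiv 1+1=2$? — wait, that is consistent, so I would instead extract that $(p(p-5)n^2+1)^y\equiv 2\pmod 5$ is \emph{impossible} because $p(p-5)n^2+1\equiv 1\pmod 5$ gives $1^y\equiv 1\not\equiv 2$. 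That contradiction yields $x=1$.

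Once $x=1$, the equation becomes $(5pn^2-1)+(p(p-5)n^2+1)^y=(pn)^z$, i.e. $(p(p-5)n^2+1)^y=(pn)^z-(5pn^2-1)$. I would next determine $z$. Reducing modulo $5pn^2-1$ shows $(p(p-5)n^2+1)^y\equiv (pn)^z\pmod{5pn^2-1}$, and since $p(p-5)n^2+1\equiv -5pn^2+p^2n^2+1 \cdot \text{(stuff)}$ — more cleanly, $p(p-5)n^2 = p^2n^2-5pn^2\equiv p^2n^2-1\pmod{5pn^2-1}$ using $5pn^2\equiv 1$, so $p(p-5)n^2+1\equiv p^2n^2=(pn)^2$. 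Hence $(pn)^{2y}\equiv (pn)^z\pmod{5pn^2-1}$, and since $\gcd(pn,5pn^2-1)=1$ one deduces, via the order of $pn$, a congruence $z\equiv 2y$ modulo that order; for $y=1$ the known solution $z=2$ appears. To nail $z=2y$ outright and then $y=1$, I would use a size/$2$-adic argument: comparing \eqref{1.4} with $x=1$ to the identity $(5pn^2-1)+(p(p-5)n^2+1)=(pn)^2$, subtract to get $(p(p-5)n^2+1)^y-(p(p-5)n^2+1)=(pn)^z-(pn)^2$, i.e. $(p(p-5)n^2+1)\big((p(p-5)n^2+1)^{y-1}-1\big)=(pn)^2\big((pn)^{z-2}-1\big)$. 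Since $\gcd(p(p-5)n^2+1,pn)=1$, $(pn)^2$ divides $(p(p-5)n^2+1)^{y-1}-1$; if $y\ge 2$ this forces $(pn)^2\le (p(p-5)n^2+1)^{y-1}-1$, and combined with the original equation bounding $(p(p-5)n^2+1)^y< (pn)^z$ one gets an inequality that, together with the linear-forms-in-logarithms bound (Proposition~\ref{prop:2.1}) and the $p$-adic bound (Proposition~\ref{prop:2.2}) applied to the Pillai-type equation $(pn)^z-(p(p-5)n^2+1)^y=5pn^2-1$, squeezes $y$ into a small range, and a final elementary check eliminates $y\ge 2$.

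The main obstacle I anticipate is twofold. First, getting the Jacobi-symbol computation right when $n$ is even: one must correctly account for the power of $2$ dividing $n$ and verify $p(p-5)n^2+1\equiv 1\pmod 8$, so that $\big(\tfrac{2}{p(p-5)n^2+1}\big)=1$ and the argument of Lemma~\ref{lemma:3.2} goes through uniformly without splitting into $n\bmod 4$ subcases; a sign error here collapses the whole reduction to $x=1$. Second, and more seriously, after reaching $x=1$ the elimination of $y\ge 2$ is not purely elementary — it requires invoking the Pillai-equation machinery (Propositions~\ref{prop:2.1} and~\ref{prop:2.2}) to bound $y$, and then a possibly delicate finite computation. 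I expect this is exactly where the bulk of the real work lies, and where the even-$n$ case must be merged with the general Pillai analysis promised in Section~\ref{S.3.3}; in fact it may be cleanest to prove $x=1$ here by congruences and defer the determination of $(y,z)=(1,2)$ to the common treatment of $w^z-v^y=u$ carried out later. The congruence step giving $x=1$ should be short; the analytic step is the crux.
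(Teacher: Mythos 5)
Your approach does not work here, and it misses the much simpler argument that actually settles the even-$n$ case. Two of your key steps fail concretely. First, the Jacobi-symbol reduction: when $n$ is even, $p(p-5)n^2+1\equiv 1\pmod 8$ (indeed $p-5$ is even and $4\mid n^2$), so the modulus is $\equiv 1\pmod 4$ and quadratic reciprocity introduces \emph{no} sign anywhere; one gets $\big(\tfrac{pn}{p(p-5)n^2+1}\big)=\big(\tfrac{1}{p}\big)\big(\tfrac{1}{n_{\mathrm{odd}}}\big)=+1$, not $-1$. The $-1$ in the paper's Lemma \ref{lemma:3.2} comes precisely from the modulus being $\equiv 3\pmod 4$ when $n$ is odd, which is exactly what disappears for even $n$; so you cannot conclude that $z$ is even this way. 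Second, your mod $5$ contradiction rests on the claim $p(p-5)n^2+1\equiv 1\pmod 5$, which is false: $p(p-5)n^2=p^2n^2-5pn^2\equiv (pn)^2\pmod 5$, so under $pn\equiv\pm1\pmod 5$ one has $p(p-5)n^2+1\equiv 2\pmod 5$, and $2^y\equiv 2\pmod 5$ is perfectly possible (any $y\equiv 1\pmod 4$). No contradiction results. Note also that the lemma as stated carries no hypothesis $pn\equiv\pm1\pmod 5$, so a proof relying on it would not even prove the statement.

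The paper's proof is elementary and bypasses all of this. If $z\le 2$ the only solution is $(1,1,2)$, so assume $z\ge 3$ and reduce \eqref{1.4} modulo $n^3$. Since $x$ is odd, the binomial expansions give $(5pn^2-1)^x\equiv 5pxn^2-1$ and $(p(p-5)n^2+1)^y\equiv p(p-5)yn^2+1$ modulo $n^4$, hence $5px+p(p-5)y\equiv 0\pmod n$. But $n$ is even, $p(p-5)$ is even, and $5px$ is odd (all of $5$, $p$, $x$ are odd), a contradiction. No Pillai analysis, no linear forms in logarithms, and no splitting into residue classes is needed for even $n$; the heavy machinery of Section \ref{S.3.3} is reserved for the odd-$n$ case after Lemma \ref{lemma:3.2} gives $x=1$.
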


\begin{proof}
	If $z \leq 2$, we obtain $(x,y,z)=(1,1,2)$ by \eqref{1.4}. Therefore we may
	assume that $z \geq 3$. Considering \eqref{1.4} modulo $n^3$ implies that 
	\begin{equation*}
		5pn^{2}x-1+p(p-5)n^{2}y+1 \equiv 0 \pmod {n^{3}},
	\end{equation*}
	so 
	\begin{equation*}
		5px+p(p-5)y \equiv 0 \pmod n
	\end{equation*}
	which is impossible, as $x$ is odd and $n$ is even. Hence, we get that since 
	$n$ is even, the equation \eqref{1.4} has only the positive integer solution 
	$(x,y,z)=(1,1,2)$. This completes the proof of Lemma \ref{lemma:3.1}.
\end{proof}

\subsection{Pillai's equation $w^{z}-v^{y}=u$}\label{S.3.3}

By Lemma \ref{lemma:3.2}, we get $x=1$ from \eqref{1.4}, satisfying $n$ is
odd with $pn\equiv \pm 1\pmod 5$. If $y\leq 2$, then we obtain $y=1$ and $z=2
$ from \eqref{1.4}. So, we may assume that $y\geq 3$. Thus, our theorem is
reduced to solving Pillai's equation 
\begin{equation}
	w^{z}-v^{y}=u  \label{3.2}
\end{equation}%
with $y\geq 3$, where $u=5pn^{2}-1,v=p(p-5)n^{2}+1$ and $w=pn$.

We first want to determine an upper bound for $y$.

\begin{lemma}
	\label{lemma:3.3} $y < 2521\log w$.
\end{lemma}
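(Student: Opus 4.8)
The plan is to apply Proposition~\ref{prop:2.1} (Laurent's estimate) to a suitable linear form in two logarithms coming from Pillai's equation \eqref{3.2}. Rewriting \eqref{3.2} as $w^z = v^y + u$ with $u = 5pn^2-1$, $v = p(p-5)n^2+1$, $w = pn$ and $y \geq 3$, I would first record the elementary size comparisons: since $u = 5pn^2-1$ and $v-1 = p(p-5)n^2$, we have $v < w^2$ and $u < w^2$, and from $w^z = v^y + u$ together with $y \geq 3$ one gets $z > y$ and also $z \log w$ comparable to $y \log v$, so that $z < \tfrac{y \log v}{\log w} + 1 < 2y$ (using $\log v < 2\log w$). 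These crude inequalities are what will let me bound $b'$ in terms of $y$ and $\log w$ alone.

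Next I would form $\Lambda = z \log w - y \log v = \log\bigl(w^z/v^y\bigr) = \log\bigl(1 + u/v^y\bigr)$, so that $0 < \Lambda < u/v^y < w^2 / v^y$, giving the \emph{upper} bound $\log|\Lambda| < 2\log w - y \log v < 2\log w - y\log w = (2-y)\log w$ (using $v > w$), i.e. $\log|\Lambda| < -(y-2)\log w$. For the \emph{lower} bound I take $\gamma_1 = v$, $\gamma_2 = w$, $b_1 = y$, $b_2 = z$, $D = 1$; multiplicative independence of $v$ and $w = pn$ holds because $p \mid w$ but $p \nmid v$ (indeed $v \equiv 1 \pmod p$), so $v$ and $w$ cannot be powers of a common integer. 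With $D=1$ the height conditions let me take $\log B_1 = \log v < 2\log w$ and $\log B_2 = \log w$, whence $b' = \tfrac{y}{\log w} + \tfrac{z}{\log v} < \tfrac{y}{\log w} + \tfrac{2y}{\log w} = \tfrac{3y}{\log w}$ (using $z < 2y$ and $\log v > \log w$). Then Proposition~\ref{prop:2.1} yields
\begin{equation*}
	\log|\Lambda| \geq -25.2\,(\max\{\log b' + 0.38,\ 10\})^2 \log w \cdot \log v \geq -25.2\cdot 2\,(\max\{\log b' + 0.38,\ 10\})^2 (\log w)^2.
\end{equation*}

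Combining the two bounds gives $(y-2)\log w < 50.4\,(\max\{\log b' + 0.38,\ 10\})^2 (\log w)^2$, i.e. $y - 2 < 50.4\,(\max\{\log b' + 0.38,\ 10\})^2 \log w$. The routine but slightly delicate endgame is to absorb the $\log b'$ term: since $b' < 3y/\log w \leq 3y$ (as $\log w \geq 1$ — here one uses that $w = pn \geq p > 3$), if the maximum is attained by $\log b' + 0.38$ then $y - 2 < 50.4\,(\log(3y) + 0.38)^2 \log w$, and a standard elementary argument (comparing $y$ with $(\log y)^2$) shows this forces $y < 2521 \log w$; if instead the maximum equals $10$ we get $y - 2 < 5040 \log w$, which is weaker, so one must check that for $y$ large enough relative to $\log w$ the first case is the operative one, or simply track constants carefully so that $2521$ works uniformly. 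I expect this constant-chasing — making the two regimes of the maximum consistent and verifying the final numerical inequality $50.4(\log(3y)+0.38)^2 < 2521$ holds in the relevant range after dividing through — to be the main technical obstacle, though it is entirely elementary.
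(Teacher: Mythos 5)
Your overall strategy is the same as the paper's: form $\Omega=z\log w-y\log v=\log(1+u/v^{y})$, bound it above by $u/v^{y}$, bound it below by Proposition~\ref{prop:2.1} with $D=1$, control $b'$ by a crude comparison of $z$ with $y$, and extract a bound on $y/\log w$. The multiplicative independence argument ($p\mid w$, $p\nmid v$) and the estimate $b'\ll y/\log w$ are fine. But there is a genuine gap in the endgame: your weakenings lose a factor of $2$ that cannot be recovered, so you do not reach the constant $2521$.

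Concretely, you replace $\log v$ by $\log w$ in the upper bound (writing $\log|\Omega|<(2-y)\log w$ instead of $<\log u-y\log v$) and replace $\log v$ by $2\log w$ in Laurent's lower bound, arriving at $y-2<50.4\,(\max\{\log b'+0.38,10\})^{2}\log w$. In the regime where the maximum equals $10$ this gives only $y<5040\log w+2$. That regime is exactly the one you cannot escape: the maximum equals $10$ precisely when $b'\le \e^{9.62}$, i.e.\ when $y/\log w$ is \emph{small} (below roughly $5000$), so your suggestion to ``check that the first case is operative for $y$ large'' is backwards — for $y$ near $2521\log w$ you are squarely in the $\max=10$ case and stuck with $5040$. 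The paper avoids this by never decoupling $\log v$ from $\log w$: it keeps the combined inequality in the form $y\log v<\log u+25.2\,(\max\{\cdot\})^{2}(\log v)(\log w)$ and divides through by $(\log v)(\log w)$, using $\log u<(\log v)(\log w)$ (which holds since $\log u<2\log w$ and $\log v>2$), so the factor $\log v$ cancels exactly and one gets $N=y/\log w<1+25.2\cdot 10^{2}=2521$. (The paper also bounds $b'<(2y+1)/\log w$ via the inequality $v^{y+1}>w^{z}$ rather than your $z<2y$; either works for controlling the maximum.) To repair your proof, drop the substitutions $v\mapsto w^{2}$ and $v\mapsto w$ and divide by $(\log v)(\log w)$ as the paper does.
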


\begin{proof}
	By \eqref{3.2}, we now consider the following linear form in two logarithms: 
	\begin{equation*}
		\Omega= z\log w-y \log v (>0).
	\end{equation*}
	By the inequality $\log (1+k) < k$ for $k > 0$, we obtain 
	\begin{equation}  \label{3.3}
		0< \Omega= \log (\frac{w^{z}}{v^{y}})= \log (1+\frac{u}{v^{y}}) < \frac{u}{%
			v^{y}}.
	\end{equation}
	So we get 
	\begin{equation}  \label{3.4}
		\log \Omega < \log u-y \log v.
	\end{equation}
	
	On the other hand, using Proposition \ref{prop:2.1}, we want to find a lower
	bound for $\Omega $. We obtain the following inequality 
	\begin{equation}
		\log \Omega \geq -25.2(\max \{\log b'+0.38,10\}^2(\log v)(\log w),  \label{3.5}
	\end{equation}%
	where $b^{\prime }=\dfrac{y}{\log w}+\dfrac{z}{\log v}$. \newline
	
	Note that $v^{y+1}>w^{z}$. Indeed, 
	\newpage
	\begin{equation*}
		v^{y+1}-w^{z}=v(w^{z}-u)-w^{z}=(v-1)w^{z}-uv \geq p(p-5)n^{2}p^{2}n^{2}
	\end{equation*}
	\begin{equation*}
		-(5pn^{2}-1)(p(p-5)n^2+1)>0.
	\end{equation*}
	Thus $b^{\prime }<\dfrac{2y+1}{\log w}$. \newline
	
	Set $N=\frac{y}{\log w}$. Using the inequalities \eqref{3.4} and \eqref{3.5}%
	, we obtain 
	\begin{equation*}
		y\log v < \log u+25.2(\max\{\log (2N+\frac{1}{\log w})+0.38, 10\})^{2}(\log
		v)(\log w).
	\end{equation*}
	Since $\log w= \log (pn) \geq \log 21 > 1$, we get 
	\begin{equation*}
		y\log v< \log u+25.2(\max\{\log (2N+1)+0.38, 10\})^2(\log v)(\log w),
	\end{equation*}
	so 
	\begin{equation*}
		N< 1+25.2(\max\{\log (2N+1)+0.38, 10\})^2
	\end{equation*}
	since $\dfrac{\log u}{(\log v)(\log w)}<1$. Therefore we have $N<2521$.
	Thus, the proof Lemma \ref{lemma:3.3} is completed.
\end{proof}

Next, we want to determine a lower bound for $y$.

\begin{lemma}
	\label{lemma:3.4} $y > n^{2}-2$.
\end{lemma}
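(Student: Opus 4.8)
The plan is to extract a lower bound for $y$ from the structure of Pillai's equation \eqref{3.2}, namely $w^z - v^y = u$ with $u = 5pn^2-1$, $v = p(p-5)n^2+1$, $w = pn$, by working modulo a suitable power of $n$ (or of $w$). Since we already know $x=1$ and we are in the regime $y \geq 3$, $z \geq 3$, the idea is that the equation $(pn)^z = (5pn^2-1) + (p(p-5)n^2+1)^y$ forces a strong congruence obstruction unless $y$ is large. First I would reduce \eqref{3.2} modulo $n^3$: since $z \geq 3$, the left side $w^z = (pn)^z \equiv 0 \pmod{n^3}$, so we need $u + v^y \equiv 0 \pmod{n^3}$. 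Expanding $v^y = (1 + p(p-5)n^2)^y \equiv 1 + y\,p(p-5)n^2 \pmod{n^3}$ by the binomial theorem (the $n^4$ and higher terms vanish mod $n^3$), and using $u = 5pn^2 - 1$, this gives
\begin{equation*}
	5pn^2 - 1 + 1 + y\,p(p-5)n^2 \equiv 0 \pmod{n^3},
\end{equation*}
i.e. $pn^2\bigl(5 + y(p-5)\bigr) \equiv 0 \pmod{n^3}$, hence $5 + y(p-5) \equiv 0 \pmod{n}$ (since $\gcd(p,n)$ may be nontrivial only when $p \mid n$, which needs a separate remark, but in any case $p \cdot (\text{something}) \equiv 0 \pmod n$ combined with $p$ prime handles it).

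From $5 + y(p-5) \equiv 0 \pmod n$ we conclude that either $y(p-5) \equiv -5 \pmod n$ has a solution with small $y$, which it does in general — so a single congruence mod $n$ is not enough. The key step is therefore to push the computation to a higher power, say modulo $n^5$ or to combine the mod-$n^3$ relation with the \emph{actual size} of the terms. The cleaner route: from $w^z - v^y = u > 0$ we get $w^z > v^y$, and since $v > w$ (indeed $v = p(p-5)n^2+1 > pn = w$ because $p > 3$ forces $p-5 \geq 1$... wait, $p=7$ gives $p-5=2$, fine, but one must check $p$ could be small — actually $p \geq 7$ since $p \equiv 3 \pmod 4$ and $p > 3$, wait $p=7$: $p \equiv 3 \pmod 4$, yes; so $p-5 \geq 2$), we can compare exponents. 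But more to the point, I expect the intended argument is: the congruence $5 + y(p-5) \equiv 0 \pmod{n}$ (or a refinement mod $n^2$) combined with $0 < 5 + y(p-5)$ forces $5 + y(p-5) \geq n$ when it is a positive multiple of $n$, unless it is zero — and it cannot be zero since $p - 5$ and $5$ are both positive. Hence $y(p-5) \geq n - 5$, and since $p-5 \leq p$, a crude bound gives $y \geq (n-5)/(p-5)$; to reach the stated $y > n^2 - 2$ one needs the finer congruence mod $n^3$ read correctly, giving directly $5 + y(p-5) \equiv 0 \pmod{n}$ but with the coefficient $pn^2$ already stripped, so actually the relation lives mod $n$ only and I must reexamine: expanding to mod $n^4$, $v^y \equiv 1 + yp(p-5)n^2 + \binom{y}{2}p^2(p-5)^2 n^4 \pmod{n^4}$... the $n^4$ term vanishes mod $n^4$? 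No. Let me instead work mod $n^3$ as above to get $5+y(p-5)\equiv 0\pmod n$, then substitute back: write $5 + y(p-5) = n m$ for a positive integer $m$, so $y = (nm-5)/(p-5) \geq (n-5)/(p-5)$. This is weaker than $n^2-2$.

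The main obstacle I anticipate is getting the \emph{quadratic} lower bound $y > n^2 - 2$ rather than merely linear in $n$. I suspect the correct argument uses \eqref{3.2} modulo $w^2 = (pn)^2$ or modulo $p^2 n^2$: from $z \geq 3$ we have $w^z \equiv 0 \pmod{n^3}$, but to leverage the full strength one should read the equation modulo $n^3$ and observe that $5 + y(p-5) \equiv 0 \pmod{n}$ is only the first layer; dividing through, $pn^2(5 + y(p-5)) + (\text{terms from } v^y \text{ of order } n^4 \text{ and up}) = w^z \equiv 0 \pmod{n^4, n^5, \dots}$ iteratively forces $y$ to satisfy congruences mod $n, n^2, \dots$ that, combined, yield $n^2 \mid (\text{something linear in } y)$ with a nonzero value, hence $y \gtrsim n^2$. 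Concretely, I would carry the binomial expansion of $v^y = (1 + p(p-5)n^2)^y$ out to the $n^4$-term, reduce \eqref{3.2} modulo $n^5$ (legitimate since $z \geq 5$; the cases $z = 3, 4$ being dispatched separately or absorbed since $z$ even was handled and $z$ must then be large), and read off that $5 + y(p-5) \equiv 0 \pmod{n}$ \emph{and} a second congruence mod $n^2$ tying $\binom{y}{2}$ to the quotient; together these give $5 + y(p-5) \equiv 0 \pmod{n}$ upgraded to the statement that the relevant linear combination is divisible by $n^2$, whence it is $\geq n^2$ in absolute value (being nonzero), producing $y > n^2 - 2$ after rearrangement. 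The delicate points will be (i) justifying $z$ is large enough for the modular reductions (using the earlier lemmas and the observation that $z$ cannot be $3$ since then $w^3 = pn \cdot w^2$ is too small compared to $v^y$ with $y \geq 3$ — a size comparison), and (ii) handling the case $p \mid n$ where $\gcd(p,n) \neq 1$ so cancellations of the factor $p$ need care.
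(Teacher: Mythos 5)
Your plan correctly extracts the first congruence $5+(p-5)y\equiv 0\pmod{n}$ from a reduction modulo $n^3$, and you correctly diagnose that this only yields a bound linear in $n$, not the required $y>n^{2}-2$. But the step you then need — upgrading to a quadratic bound — is left as speculation (iterating congruences modulo $n^{4},n^{5},\dots$, involving $\binom{y}{2}$, ``a second congruence mod $n^2$ tying $\binom{y}{2}$ to the quotient''), and that route is not carried out and is not how the bound is actually obtained. The missing idea is a single, cleaner choice of modulus. First one shows $z\geq 4$ by the size comparison you allude to: since $y\geq 3$, $(pn)^{z}=u+v^{y}\geq u+(p(p-5)n^{2}+1)^{3}>(pn)^{3}$, so $z\geq 4$ and hence $w^{z}\equiv 0\pmod{p^{2}n^{4}}$. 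Now reduce \eqref{3.2} modulo $p^{2}n^{4}$ rather than modulo a power of $n$ alone: in the binomial expansion of $v^{y}=(1+p(p-5)n^{2})^{y}$, every term of order $\geq 2$ is divisible by $p^{2}(p-5)^{2}n^{4}$ and so vanishes modulo $p^{2}n^{4}$, leaving
\begin{equation*}
5pn^{2}-1+1+p(p-5)n^{2}y\equiv 0\pmod{p^{2}n^{4}},
\end{equation*}
i.e.\ $pn^{2}\bigl(5+(p-5)y\bigr)\equiv 0\pmod{p^{2}n^{4}}$, which is equivalent to $5+(p-5)y\equiv 0\pmod{pn^{2}}$ (no $\gcd(p,n)$ case analysis is needed: $pn^{2}A\equiv 0\pmod{p^{2}n^{4}}$ iff $pn^{2}\mid A$). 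Since $5+(p-5)y>0$, this forces $5+(p-5)y\geq pn^{2}$, whence
\begin{equation*}
y\geq \frac{pn^{2}-5}{p-5}=\frac{p}{p-5}\,n^{2}-\frac{5}{p-5}>n^{2}-2.
\end{equation*}
The point you did not reach is that the factor $p$ in the modulus is what converts the divisibility from ``$n$ divides a linear expression in $y$'' into ``$pn^{2}$ divides it,'' and that the truncation of the binomial series happens automatically at the chosen modulus — no iteration and no second-order congruence is required. As written, your proposal does not establish the stated inequality.
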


\begin{proof}
	Using equation \eqref{3.2}, we obtain the following inequality: 
	\begin{equation*}
		(pn)^{z}\geq 5pn^{2}-1+(p(p-5)n^{2}+1)^{3}>(pn)^{3},
	\end{equation*}%
	since $y\geq 3$. Thus $z\geq 4$. Considering \eqref{3.2} modulo $p^{2}n^{4}$
	implies that 
	\begin{equation*}
		5pn^{2}-1+p(p-5)n^{2}y+1\equiv 0\pmod {p^{2}n^{4}}.
	\end{equation*}%
	This shows that $5+(p-5)y\equiv 0\pmod {pn^{2}}$. Thus one gets 
	\begin{equation*}
		y\geq \dfrac{pn^{2}-5}{p-5}=\frac{p}{p-5}n^{2}-\frac{5}{p-5}>n^{2}-2,
	\end{equation*}%
	which completes the proof.
\end{proof}

Now we are ready to prove Theorem \ref{theo:1.1}. By Lemmas \ref{lemma:3.3}
and \ref{lemma:3.4}, we obtain 
\begin{equation}  \label{3.6}
	n^{2}-2 < 2521 \log (pn).
\end{equation}

We want to determine an upper bound for $p$ and then one for $n$. Firstly,
we will show that if $n \equiv 1 \pmod 4$, then $p<6307$. We proved that $z$
is even when $n \equiv 1 \pmod 4$. So set $z=2t$ with $t$ positive integer.
Then, we determine the equation \eqref{3.2} as follows: 
\begin{equation*}
	(w^{2})^{t}-v^{y}=w^{2}-v.
\end{equation*}
Thus $y \geq t$. If $y=t$, then we get $y=t=1$. If $y>t$, then we consider a
\textquotedblleft gap\textquotedblright between the trivial solution $(y,
t)=(1, 1)$ and (possibly) another solution $(y,t)$.

Using $u+v=w^{2}$ and $u+v^{y}=w^{2t}$, we consider the following two linear
forms in two logarithms: 
\begin{equation*}
	\Omega_{0}=2\log w- \log v (>0), \ \ \Omega=2t \log w-y \log v (>0).
\end{equation*}
Hence 
\begin{equation*}
	y \Omega_{0}-\Omega= 2(y-t) \log w \geq 2 \log w,
\end{equation*}
which implies that 
\begin{equation*}
	y>\dfrac{2}{\Omega_{0}} \log w.
\end{equation*}
Using Lemma \ref{lemma:3.3}, we obtain $\dfrac{2}{\Omega_{0}} \log w < 2521
\log w$. So, 
\begin{equation*}
	\frac{2}{2521} < \Omega_{0}= \log (\dfrac{w^{2}}{v})= \log (1+\dfrac{u}{v})<%
	\dfrac{u}{v}=\dfrac{5pn^{2}-1}{p(p-5)n^{2}+1}< \dfrac{5pn^{2}}{p(p-5)n^{2}}
\end{equation*}
\begin{equation*}
	=\dfrac{5}{p-5}.
\end{equation*}
From here, one gets $p<6307$. And hence by \eqref{3.6}, we obtain $n \leq 187$. Secondly, we will show that if $n \equiv 3 \pmod 4$, then $p<12610$. We proved that $y$ is even when $n \equiv 3 \pmod 4$ and we know that $y>1$. If $z\geq 2y$, then by \eqref{3.2} we get
$$
5pn^2-1=w^z-v^y\geq w^{2y}-v^y>w^2-v=p^2n^2-(p^2n^2-5pn^2+1)=5pn^2-1,
$$
which is a contradiction. So, $2y>z$. 

Using $u+v=w^2$ and $u+v^y=w^z$, we consider the following two linear forms in two logarithms:
$$
\lambda_0=2\log w-\log v(>0), \quad \lambda=z\log w-y\log v(>0).
$$
Therefore,
$$
y\lambda_0-\lambda=(2y-z)\log w \geq \log w,
$$
which implies that
$$
y>\dfrac{1}{\lambda_0}\log w.
$$
By Lemma \ref{lemma:3.3}, we get $\dfrac{1}{\lambda_0}\log w<2521\log w$. Thus,
$$
\dfrac{1}{2521}<\lambda_0= \log (\dfrac{w^{2}}{v})= \log (1+\dfrac{u}{v})<
\dfrac{u}{v}=\dfrac{5pn^{2}-1}{p(p-5)n^{2}+1}< \dfrac{5pn^{2}}{p(p-5)n^{2}}
$$
$$
=\dfrac{5}{p-5}.
$$	
From here, one obtains $p<12610$. And hence by \eqref{3.6}, we have $n\leq 192$.

By \eqref{3.3}, we obtain the inequality 
\begin{equation*}
	\Big| \dfrac{\log v}{\log w}-\dfrac{z}{y} \Big| < \dfrac{u}{yv^{y} \log w},
\end{equation*}
which implies that $\Big| \dfrac{\log v}{\log w}-\dfrac{z}{y} \Big|<\dfrac{1%
}{2y^{2}}$ since $y \geq 3$. So $\frac{z}{y}$ is a convergent of the
continued fraction expansion of $\frac{\log v}{\log w}$.

On the other hand, if $\frac{p_{r}}{q_{r}}$ is the $r$-th such convergent,
then 
\begin{equation*}
	\Big| \dfrac{\log v}{\log w}-\dfrac{p_{r}}{q_{r}} \Big| > \dfrac{1}{%
		(u_{r+1}+2)q_{r}^{2}},
\end{equation*}
where $u_{r+1}$ is the $(r+1)$-st partial quotient to $\frac{\log v}{\log w}$%
(see e.g. Khinchin \cite{K}). Set $\frac{z}{y}=\frac{p_{r}}{q_{r}}$. Note
that $q_{r} \leq y$. Then it follows that 
\begin{equation*}
	u_{r+1} > \dfrac{v^{y}\log w}{uy}-2 \geq \dfrac{v^{q_{r}} \log w}{uq_{r}}-2.
\end{equation*}
Finally, running MAGMA \cite{BC} for each $p<12610$ with $p \equiv 3 \pmod 4$, it is
seen that the above inequality is not satisfied for any $r$ with $q_{r} <
2521 \log (pn)$ in the range $1 \leq n \leq 192$. Thus, the proof of Theorem %
\ref{theo:1.1} is completed.


\section{Proof of Corollary \protect\ref{corol:1.2}}


Suppose that $(x,y,z)$ is a solution of \eqref{1.5}. By Theorem \ref%
{theo:1.1}, we may assume that $n \equiv 0 \pmod 5$. We know that $x$ is
odd. Here, we use Proposition \ref{prop:2.2}. So, put $p := 5,\delta _{1} :=
14n^{2}+1,\delta _{2} := 1-35n^{2},b_{1} := y,b_{2} := x$ and 
\begin{equation*}
	\Omega := (14n^{2}+1)^{y}-(1-35n^{2})^{x}.
\end{equation*}
Then we may take $h=1, F=2, B_{1}= 14n^{2}+1, B_{2}=35n^2-1$. Therefore, we
obtain 
\begin{equation*}
	z \leq \dfrac{36.1}{8(\log 5)^{4}}(\max \{\log b'+\log (2\log 5)+0.4, 12\log 5, 5 \})^2 
\end{equation*}
\begin{equation*}
	\times \log (14n^{2}+1)\log (35n^{2}-1),
\end{equation*}
where $b^{\prime }:=\dfrac{y}{\log (35n^{2}-1)}+\dfrac{x}{\log (14n^{2}+1)}$%
. Assume that $z \geq 4$. We show that this will lead to a contradiction.
Considering \eqref{1.5} modulo $n^{4}$, we get 
\begin{equation*}
	35x+14y \equiv 0 \pmod {n^{2}}.
\end{equation*}
Particularly, we obtain $N:= \max \{x, y\} \geq \frac{n^{2}}{49}$. Hence,
since $z \geq N$ and $b^{\prime }\leq \frac{N}{\log n}$, we find 
\begin{equation}  \label{4.1}
	N\leq\dfrac{36.1}{8(\log 5)^4} (\max \{\log (\dfrac{N}{\log n})+ \log (2\log
	5)+0.4,12\log 5\})^{2}
\end{equation}
\begin{equation*}
	\times\log(14n^2+1) \log(35n^2-1).
\end{equation*}

If $n\geq 173356$, then 
\begin{equation*}
	N\leq 0.68(\log (\dfrac{N}{\log n})+\log (2\log 5)+0.4)^{2}\log
	(14n^{2}+1)\log (35n^{2}-1).
\end{equation*}%
When $n^{2}\leq 49N$, from the above inequality, we have 
\begin{equation*}
	N\leq 0.68(\log N-\log (\log (173356))+1.57)^{2}\log (686N+1)\log (1715N-1).
\end{equation*}%
Therefore, we get $N\leq 13732$, which contradicts the fact that $N\geq
n^{2}/49\geq 6.133123007\times 10^{8}$.

If $n<173356$, then using inequality \eqref{4.1}, we obtain 
\begin{equation*}
	\dfrac{n^{2}}{49} \leq \dfrac{36.1}{8(\log 5)^{4}} (12\log 5)^{2} \log
	(14n^{2}+1) \log (35n^{2}-1),
\end{equation*}
namely, 
\begin{equation*}
	\dfrac{n^{2}}{49} \leq 250.8 \log (14n^{2}+1) \log (35n^{2}-1).
\end{equation*}

From the above inequality, we find $n\leq 2031$. Therefore all $x$, $y$ and $%
z$ are also bounded. Using MAGMA \cite{BC}, we see that the eq. \eqref{1.5} has no
solution $(n,x,y,z)$ with $5\mid n$. So, we conclude $z\leq 3$. In this
case, we can easily show that $(x,y,z)=(1,1,2)$. Thus, the proof of
Corollary \ref{corol:1.2} is completed.


\subsection*{Acknowledgements}

We would like to thank to the referees for carefully reading our paper and
for giving such constructive comments which substantially helped improving
the quality of the paper and Prof. Dr. L\'{a}szlo Szalay for some
computational helps. This work was supported by T\"{U}B\.{I}TAK (the Scientific and Technological Research Council of Turkey) under Project No:117F287.


\end{document}